\def\BState{\State\hskip-\ALG@thistlm}
\title{\LARGE \bf
On Robustness of the Normalized Subgradient Method with\\ Randomly Corrupted Subgradients
}
\author{{Berkay Turan}\and
{C\'esar A. Uribe}\and
{Hoi-To Wai}
\and
{Mahnoosh Alizadeh}% <-this % stops a space
\thanks{B. Turan and M. Alizadeh are with  Dept. of ECE, UCSB, Santa Barbara, CA, USA. C. A. Uribe is with Dept. of ECE, Rice University, TX, USA. H. T. Wai is with Dept. of SEEM, CUHK, Shatin, Hong Kong. This work is supported by UCOP Grant LFR-18-548175, NSF grant \#1847096, and CUHK Direct Grant \#4055113. E-mails: \url{bturan@ucsb.edu}, \url{cauribe@rice.edu}, \url{htwai@se.cuhk.edu.hk}, \url{alizadeh@ucsb.edu}}% <-this % stops a space
}
\begin{document}
\addtolength{\abovedisplayskip}{-.05cm}
\addtolength{\belowdisplayskip}{-.05cm}
\addtolength{\textfloatsep}{-.5cm}

\maketitle
\thispagestyle{empty}
\pagestyle{empty}

%%%%%%%%%%%%%%%%%%%%%%%%%%%%%%%%%%%%%%%%%%%%%%%%%%%%%%%%%%%%%%%%%%%%%%%%%%%%%%%%
% some changes
\begin{abstract}
Numerous modern optimization and machine learning algorithms rely on subgradient information being trustworthy and hence, they may fail to converge when such information is corrupted. In this paper, we consider the setting where subgradient information may be arbitrarily corrupted (with a given probability) and study the robustness properties of the normalized subgradient method. Under the probabilistic corruption scenario, we prove that the normalized subgradient method, whose updates rely solely on  directional information of the subgradient, converges to a minimizer for convex, strongly convex, and weakly-pseudo convex functions satisfying certain conditions. 
% We provide numerical evidence of the result on linear regression and logistic classification problems.
Numerical evidence on linear regression and logistic classification problems supports our results.

\end{abstract}
\theoremseparator{.}

\newtheorem{proposition}{Proposition}
\newtheorem{theorem}{Theorem}
\newtheorem*{theoremnonumber}{Theorem}
\newtheorem{corollary}{Corollary}
\newtheorem{lemma}{Lemma}
\newtheorem{Fact}{Fact}
\newtheorem{remark}{Remark}
\newtheorem{assumption}{Assumption}
\newtheorem{definition}{Definition}

\newcommand{\eqdef}{\vcentcolon=}
\newcommand{\beq}{\begin{equation}}
\newcommand{\eeq}{\end{equation}}
\newcommand{\ie}{i.e., }

%%%%%%%%%%%%%%%%%%%%%%%%%%%%%%%%%%%%%%%%%%%%%%%%%%%%%%%%%%%%%%%%%%%%%%%%%%%%%%%%
\section{INTRODUCTION}
Gradient-based methods are  the most commonly used optimization algorithms in many modern applications including machine learning, control, and signal processing. Owing to their convenience for theoretical analysis and implementation, extensive research focusing on accelerating gradient descent has emerged in the literature. Existing algorithms aim to accelerate gradient descent by choosing adaptive stepsizes \cite{qian1999momentum,nesterov1983method,duchi2011adaptive,zeiler2012adadelta,tieleman2012lecture,kingma2014adam,dozat2016incorporating}. In addition to their practical success, these methods are supported by  strong  theoretical  guarantees  such as fast convergence rates.  Furthermore,  distributed implementations of optimization algorithms, where the data is stored in worker machines, are gaining traction to preserve privacy. In these schemes, the gradients are computed  at the worker machines and communicated to a master machine \cite{yang2019federated}.

However, the above algorithms rely on the assumption that the gradients are \emph{trustworthy}. In reality, the gradients might be erroneous due to computational errors at the machines or data corruption caused by an adversary. Moreover,  distributed implementations of these algorithms require reliable communication between the machines. Such schemes are susceptible to man-in-the-middle attacks, where an adversary can  take  over  network sub-systems and arbitrarily alter the information communicated  between the machines to prevent convergence to the optimal solution, \ie a Byzantine attack \cite{lamport2019byzantine}. In these situations, gradient-based methods that use the exact gradient information may fail to converge, as the erroneous gradient can have an arbitrarily large effect on the algorithm. 

This paper studies the robustness property of the normalized subgradient method (NSM) \cite{nesterov2004introductory}, which performs a subgradient update with an adaptive step size scaled as the reciprocal of the norm of subgradient. We consider a probabilistic corruption scenario where the subgradients received are arbitrarily corrupted at random. As the NSM only uses the directional information of the subgradient, the effects of corrupted subgradients will be limited. We show that when the subgradient corruption probability is below a certain threshold, the NSM converges to an optimal solution for convex and weakly pseudo-convex problems. In particular, let $t$ be the iteration number and let the stepsize decay at ${\cal O}(1/t)$, then the NSM finds an optimal solution at the rate of ${\cal O}(\log(t)/t)$.  

% Our goal in this paper is to showcase the robustness properties of normalized subgradient method \cite{nesterov2004introductory}  in solving optimization problems where the subgradient at each iteration may be arbitrarily without any restrictions. This is mainly due to the fact that normalized subgradient method only uses the directional information of the subgradient, which limits the effect of the erroneous subgradients and allows the algorithm to converge in situations where other first order algorithms may fail.  Specifically, here we consider a probabilistic corruption scenario, where at each iteration the subgradient may be corrupt with some probability $p$. We show that by simply applying a normalized subgradient method with properly chosen stepsizes that decrease with ${\cal O}(1/t)$, the parameter vector converges to an optimal solution almost surely.

\noindent
\textbf{Related Work:} There is an extensive literature  on developing and analyzing the convergence of optimization algorithms, including those that consider adversarial manipulation.  Limited by space, we review categories of prior art that we believe are most related to this work.
\begin{enumerate}[wide, labelindent=0pt]
\item \emph{Normalized (sub)gradient method:} NSM is a well-studied algorithm for optimization and is supported by theoretical convergence guarantees \cite{shor2012minimization,nesterov2004introductory}. When restricted to optimizing differentiable functions, the NSM has been shown to evade saddle points quickly in non-convex optimization problems \cite{levy2016power,murray2019revisiting}. Moreover, \cite{hazan2015beyond} studies a stochastic version of NSM and shows convergence to a global minimum for quasi-convex functions. A recent work \cite{gao2018online} studies an online learning problem and establishes regret bounds for normalized gradient methods for weakly pseudo-convex functions. Scholars have also used normalization of the gradient in first-order optimization algorithms to accelerate the convergence \cite{fang2018spider,curtis2019stochastic}.
\item \emph{Learning under adversarial corruption:} The task of learning under adversarial corruption has been a popular research topic. The authors of \cite{huang2011adversarial} provide a comprehensive survey on effective learning techniques against an adversarial opponent and give a taxonomy for classifying attacks. Commonly studied tasks for adversarial machine learning have been classification \cite{kurakin2016adversarial,lowd2005adversarial,jang2017objective,dalvi2004adversarial} and linear regression \cite{chen2013robust,bhatia2015robust,mcwilliams2014fast,diakonikolas2019sever}, where the adversarial corruption is usually due to data manipulation. Closest to our setup would be \cite{mahloujifar2017blockwise,mahloujifar2018learning}, which consider a probabilistic corruption scenario  called $p$-tampering. However, in these works the adversary is restricted to choosing valid tampered data with correct labels instead of the arbitrary corruption we study in this paper.
\item \emph{Robust distributed optimization: }This line of work \cite{turan2020resilient,ghosh2019communication,pillutla2019robust,wu2020federated,ghosh2019robust,yin2018byzantine,yin2019defending,chen2017distributed,bernstein2018signsgd} focuses on developing robust algorithms for distributed optimization/federated learning by means of robust aggregation techniques. In these studies, there is assumed to be a constant $\alpha$ fraction of Byzantine worker machines that are being controlled by an adversary, who alters the messages transmitted from workers to the master machine in order to prevent convergence of distributed implementation of optimization algorithms. The popular solution idea is to filter out the adversarial messages by robust mean estimation of the messages received from all the worker machines.
\end{enumerate}
There is no existing work on robust optimization in a probabilistically and arbitrarily corrupted subgradient setting to the best of our knowledge. In this paper, we study the robustness of the NSM in this context.

\noindent
\textbf{Paper Organization:}
The remainder of the paper is organized as follows. In Section~\ref{sec:problem}, we formalize the problem setup and demonstrate how faulty subgradients can affect the convergence. In Section~\ref{sec:ngm}, we describe the normalized subgradient method (Algorithm~\ref{alg:normalizedgd}) and analyze its convergence in a randomly corrupted subgradient setting for convex, strongly convex, and weakly pseudo-convex functions. In Section~\ref{sec:numerical}, we provide numerical experiments demonstrating the robustness of the normalized subgradient method.

\noindent
\textbf{Notations}. Unless otherwise specified, $\| \cdot \|$ denotes the standard Euclidean norm. Given a vector $x_t$, $x_{t,i}$ indicates the $i$'th entry of $x_t$. The abbreviation \textit{a.s.} indicates almost sure convergence.

\section{PROBLEM SETUP}\label{sec:problem}
In this section, we formally set up our problem and introduce key concepts and definitions that will be used in the paper. Let $f(\cdot):\mathbb{R}^d\rightarrow\mathbb{R}$ be a cost function of a parameter vector $x\in {\cal X}\subseteq \mathbb{R}^d$, where $\cal X$ is the parameter set that is assumed to be convex and compact with diameter $R$, \ie$\|x_1-x_2\|\leq R,\forall x_1,x_2\in{\cal X}$. Our goal is to find the parameter that minimizes the cost function:
\begin{equation}\label{eq:optimizationproblem}
    x^\star=\underset{x\in\cal X}{\arg \min}~f(x).
\end{equation}
% where the parameter set $\cal X$ is assumed to be convex and compact with diameter $R$, \ie$\|x_1-x_2\|\leq R,\forall x_1,x_2\in{\cal X}$. 
(Sub)gradient-based methods are effective for iteratively solving \eqref{eq:optimizationproblem}. At each iteration $t$, the iterate $x_t$ is updated upon observing the feedback $g(x_t)$, where $g(x_t)\in \partial f(x_t)$ such that $\partial f(x_t)$ is the subdifferential of $f$ at $x_t$; note that if $f$ is differentiable, then $\partial f(x_t) = \{ \nabla f(x_t) \}$ (We use the notation $g(x_t)=\nabla f(x_t)$ even if $f$ is differentiable but non-convex). 
% If $f$ is continuously differentiable and possibly non-convex, then we use the notation $g(x_t)=\nabla f(x_t)$, \ie $g(x_t)$ is the gradient of $f$ at $x_t$. 
For simplicity of exposition, we will use subdifferential and gradient interchangeably depending on whether the function is differentiable or not. 
% Here, $h_t$ is a subgradient of $f(\cdot)$ at $x_t$, which is defined as:
% \begin{definition}
% A vector $g\in \mathbb{R}^d$ is a \textbf{subgradient} of $f$ at $x\in{\cal X}$ if
% \begin{equation}
%     f(y)\geq f(x)+\langle g,y-x\rangle
% \end{equation}
% holds for all $y\in {\cal X}$.
% \end{definition}

Gradient-based methods however rely on the feedback received being a trustworthy subgradient information. In this paper, we consider the case where at each iteration $t$, the feedback received is corrupt with probability $p$, potentially due to an adversarial attack. Therefore at each iteration $t$, the feedback is determined as:
\begin{equation}\label{eq:gradients}
    h_t=\left\{\begin{array}{cl}
     g(x_t) & \text{with probability } 1-p, \\
        b_t & \text{with probability } p.
    \end{array}\right.
\end{equation}
We let the corrupt feedback $b_t$ to be chosen potentially by an adversary, who is assumed to have full knowledge of the problem. The adversary's goal is to prevent the iterates $x_t$ from converging to the optimal solution. We note that this model encompasses all the cases where the feedback can become corrupt (e.g., communication errors, computational errors, noise) since we set no restrictions on $b_t$. It is important to mention that because $b_t$ is assumed to be arbitrary, existing subgradient methods under stochastic errors, e.g., \cite{boyd2008stochastic,borkar2009stochastic,bertsekas2000gradient}, are not applicable in this setting. This literature models the error on the gradients as additive noise with bounded mean and the variance, whereas the adversarial manipulation we consider in this work can be unbounded and arbitrary.

\noindent
\textbf{A toy example:} In order to make the problem setup clearer and to demonstrate how corrupt subgradients can affect the convergence, let us study a simple example. Let $x\in {\cal X}\subset \mathbb{R}^d$, where ${\cal X}=\{{ x}|x_0=x_1=\dots =x_{d-1}; |x_i|\leq R\}$, and $f(x)=x_{d-1}^4$. The optimal solution is $x^\star=[0,\cdots,0]^T$. The function $f$ is differentiable, the gradient at iteration $t$ is:
\begin{equation}
    g(x_t)=[0,~0,~\dots,~0,~4x_{t,0}^3]^T.
\end{equation}
Suppose that $x_{t,0}>0$ and we are applying the regular gradient descent algorithm. Given stepsize $\gamma_t$, if $h_t=g(x_t)$, the next iterate after projection is $x_{t+1}=[x_{t,0}-{4x_{t,0}^3\gamma_t}/{d},\dots,~x_{t,0}-{4x_{t,0}^3\gamma_t}/{d}]^T$. Suppose that at each iteration $t$, the adversary has full knowledge and sets the corrupt gradient as
\begin{equation}
    b_t=[-c,~-c,~-c,~\dots,~-c,~-c]^T,
\end{equation}
and therefore if $h_t=b_t$, the next iterate is
$x_{t+1}=\min\{[R,\dots,~R]^T,[x_{t,0}+\gamma_tc,~\dots,~x_{t,0}+\gamma_tc]^T\}$. We can observe that if the adversary sets $c>({1/p-1})({4R^3}/{d})$, then $x_{t+1,i}\geq x_{t,i}$ in expectation and the adversary will prevent convergence (since $x_t\leq R$).

\begin{algorithm}[t]
    \caption{Normalized Subgradient Method (NSM)}
    \begin{algorithmic}
    \Require Initialize $x_1\in {\cal X}$, step size $\gamma_t$, and $T$    
        \For{$t=1$ to $T$}
            \State Given $x_t$, receive feedback $h_t$ according to \eqref{eq:gradients}.
            \If{$\|h_t\|>0$}
            \State $x_{t+1}=\Pi_{\cal X}\left(x_t-\gamma_t\frac{h_t}{\|h_t\|}\right)$
            \Else
            \State $x_{t+1}=x_t$
            \EndIf
        \EndFor
    \end{algorithmic}
    \label{alg:normalizedgd}
\end{algorithm}

Let us study how the normalized subgradient method outlined in Algorithm~\ref{alg:normalizedgd} would potentially solve this problem. At iteration $t$, the normalized gradient is:
\begin{equation}
    \frac{g(x_t)}{\|g(x_t)\|}=[0,~0,~\dots,~0,~1]^T.
\end{equation}
The normalized corrupt gradient is:
\begin{equation}
    \frac{b_t}{\|b_t\|}=\frac{[-1,~-1,~-1,~\dots,~-1,~-1]^T}{\sqrt{d}}
\end{equation}
If $h_t=g(x_t)$,  $x_{t+1}=[x_{t,0}-{\gamma_t}/{d},\dots,~x_{t,0}-{\gamma_t}/{d}]^T$. If $h_t=b_t$, $ x_{t+1}=[x_{t,0}+{\gamma_t}/{\sqrt{d}},~\dots,~x_{t,0}+{\gamma_t}/{\sqrt{d}}]$ assuming that $x_{t,0}+{\gamma_t}/{\sqrt{d}}\leq R$. Therefore, in expectation:
\begin{equation}
     \mathbb{E}[x_{t+1}|x_t]=[x_{t,0}-(1-p){\gamma_t}/{d}+p{\gamma_t}/{\sqrt{d}},~\dots]^T.
\end{equation}
Hence, if $p<{1}/({1+\sqrt{d}})$, the iterates will converge; if $p>{1}/({1+\sqrt{d}})$ they will diverge in expectation. This is not the case for the regular gradient descent, where the adversary can set the magnitude of the corrupt gradient large enough to prevent convergence even for low $p$. Furthermore, the threshold probability $p$, below which the normalized subgradient method succeeds, is an inherent feature of the cost function that we will discuss later in Section~\ref{sec:prob}.

In the next section, we will analyze the convergence of the NSM in the previously described randomly corrupted subgradient setting for (a) convex, (b) strongly convex, and (c) weakly pseuodo-convex functions. We conclude this section defining the class of functions studied in this paper.
\begin{definition}\label{def:smooth}
A differentiable function $f(\cdot)$ is said to be \textbf{$\boldsymbol{\beta}$-smooth} if there exists $\beta>0$ such that
\begin{equation}
    \|g(x_1)-g(x_2)\|\leq \beta \|x_1-x_2\|
\end{equation}
holds for all $x_1,x_2\in \cal X$.
\end{definition}
\begin{definition}\label{def:strong}
A differentiable function $f(\cdot)$ is said to be \textbf{$\boldsymbol{\mu}$-strongly convex} if there exists $\mu>0$ such that
\begin{equation}
    \langle g(x_1)-g(x_2),x_1-x_2 \rangle\geq \mu\|x_1-x_2\|^2
\end{equation}
holds for all $x_1,x_2\in \cal X$.
% , and any $g(x_1)\in\partial f(x_1)$, $g(x_2)\in \partial f(x_2)$.
\end{definition}
\begin{definition}\label{def:acute}
A function $f(\cdot)$ is said to satisfy the \textbf{acute angle} condition if there exists some $\phi$ satisfying $0\leq \phi<\frac{\pi}{2}$ such that
\begin{equation}\label{eq:acutangle}
    \langle g(x),x-x^\star(x)\rangle\geq \cos{\phi}\|g(x)\|\|x-x^\star(x)\|
\end{equation}
holds for all $x\in {\cal X}$ and $g(x)\in \partial f(x)$, where $x^\star(x)$ is the point in the set of minima of $f$ that is nearest to $x$.
\end{definition}
As an example, the cost function studied in the simple example $f(x)=x_{d-1}^4$ satisfies the acute angle condition with $\cos{\phi}={1}/{\sqrt{d}}$. Another example is $f(x)=|x_1|+|x_2|$, which satisfies the acute angle condition with $\cos{\phi}=1/\sqrt{2}$.
\begin{definition}\label{def:wpc}
A differentiable function $f(\cdot)$ is said to be \textbf{weakly pseudo-convex} if there exists $K>0$ such that
\begin{equation}
    f(x)-f(x^\star)\leq K\frac{\langle g(x),x-x^\star(x)\rangle}{\|g(x)\|}
\end{equation}
holds for all $x\in{\cal X}$, with the convention that ${g(x)}/{\|g(x)\|}=0$ if $g(x)=0$, where $x^\star(x)$ is the point in the set of minima of $f$ that is nearest to $x$.
\end{definition}
Some examples of weakly pseudo-convex functions are:
\begin{itemize}[leftmargin=4mm]
    \item Differentiable, Lipschitz continuous, and pseudo-convex functions \cite{nesterov2004introductory};
    \item Star-convex \cite{nesterov2006cubic} and smooth functions (e.g., $f(x)=|x|(1-e^{|x|})$);
    \item Functions with bounded gradient that satisfy the acute angle condition;
    \item Functions with bounded gradient that satisfy the $\alpha$-homogeneity with respect to its minimum; (\cite{gao2018online}, Proposition 3), \ie there exists $\alpha>0$ such that
    \begin{equation}
        f(t(x-x^\star)+x^\star)-f(x^\star)=t^\alpha(f(x)-x(x^\star))
    \end{equation}
    holds for all $x\in {\cal X}$ and $t\geq 0$. For instance, $f(x)=(x_1^2+x_2^2)^2+10(x_1^2-x_2^2)^2$ satisfies this condition and is not convex.
\end{itemize}
\section{ROBUSTNESS ANALYSIS}\label{sec:ngm}
In this section, we study the normalized subgradient method and show that it can be used to solve the problem defined in Section~\ref{sec:problem}. The idea behind is that by normalization, the feedback is restricted to contain only a directional information. This allows us to limit the adversary's attack potential by not allowing arbitrarily large updates, which would have been possible without normalization.

We summarize the normalized subgradient method in Algorithm~\ref{alg:normalizedgd}. At each iteration $t$, the algorithm uses the feedback $h_t$ to compute the normalized vector $h_t/\|h_t\|$ as the update direction. Similar to a standard gradient descent method, it moves the iterate along that direction with stepsize $\gamma_t$ and then projects the point back to the decision set $\cal X$.

In order to prove the convergence of Algorithm~\ref{alg:normalizedgd} in a randomly corrupted subgradient setting, we will use the Robbins-Siegmund Theorem \cite{robbins1971convergence} as an auxiliary result:

\begin{theorem}[Robbins-Siegmund]\label{thm:RS}
Let $(V_t)_{t\geq 1}$, $(\alpha_t)_{t\geq 1}$, $(\chi_t)_{t\geq 1}$, $(\eta_t)_{t\geq 1}$ be four nonnegative $({\cal F})_{t\geq 1}$-adapted processes such that $\sum_t \alpha_t<\infty$ and $\sum_t \chi_t<\infty$ almost surely. If for each $t\in \mathbb{N}$,
\begin{equation}
    \mathbb{E}\left[V_{t+1}|{\cal F}_t\right]\leq V_t(1+\alpha_t)+\chi_t-\eta_t
\end{equation}
then $(V_t)_{t\geq 1}$ converges almost surely to a random variable $V_\infty$ and $\sum_t \eta_t$ is finite almost surely.
\end{theorem}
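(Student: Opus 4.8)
The plan is to reduce the statement to the classical supermartingale convergence theorem: first rescale the recursion so as to absorb the multiplicative factor $(1+\alpha_t)$, then exhibit an explicit supermartingale, and finally use a stopping-time localization to cope with the fact that this supermartingale is only bounded below by an almost-surely finite (rather than deterministic) random variable. That localization is where I expect the only real subtlety to lie.

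First I would remove the factor $\alpha_t$. Since each $\alpha_s$ is ${\cal F}_s$-adapted and $\sum_t \alpha_t<\infty$ a.s., the products $\beta_t\eqdef \prod_{s=1}^{t-1}(1+\alpha_s)$ are ${\cal F}_{t-1}$-measurable, nondecreasing in $t$, bounded below by $1$, and converge a.s. to a finite limit; in particular $\beta_{t+1}$ is ${\cal F}_t$-measurable. Setting $V'_t=V_t/\beta_t$, $\chi'_t=\chi_t/\beta_{t+1}$, $\eta'_t=\eta_t/\beta_{t+1}$ and using $\beta_{t+1}=\beta_t(1+\alpha_t)$, a direct computation gives $\mathbb{E}[V'_{t+1}|{\cal F}_t]\leq V'_t+\chi'_t-\eta'_t$, while $\beta_{t+1}\geq 1$ yields $\sum_t \chi'_t\leq \sum_t \chi_t<\infty$ a.s. This reduces the problem to the case $\alpha_t\equiv 0$, so from here I assume $\mathbb{E}[V_{t+1}|{\cal F}_t]\leq V_t+\chi_t-\eta_t$.

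Next I would introduce $W_t\eqdef V_t-\sum_{s=1}^{t-1}(\chi_s-\eta_s)$. Using the recursion and the ${\cal F}_t$-measurability of the partial sums, one checks $\mathbb{E}[W_{t+1}|{\cal F}_t]\leq W_t$, so $(W_t)$ is a supermartingale. The obstacle is that $W_t$ admits only the bound $W_t\geq -\sum_{s=1}^{t-1}\chi_s\geq -\sum_{s=1}^{\infty}\chi_s$ (using $V_t\geq 0$, $\eta_s\geq 0$), whose right-hand side is finite merely almost surely, so the convergence theorem does not apply directly. To resolve this I would localize: for each $m$ define the stopping time $\tau_m=\inf\{t:\sum_{s=1}^{t}\chi_s>m\}$. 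Since $\sum_{s=1}^{t-1}\chi_s\leq m$ for every $t\leq \tau_m$, the stopped supermartingale obeys the deterministic bound $W_{t\wedge\tau_m}\geq -m$, whence $\sup_t\mathbb{E}[W_{t\wedge\tau_m}^-]\leq m<\infty$ and the supermartingale convergence theorem gives a.s. convergence of $W_{t\wedge\tau_m}$ to a finite limit. Because $\sum_s\chi_s<\infty$ a.s., for almost every $\omega$ there is an $m$ with $\tau_m(\omega)=\infty$, so the events $\{\tau_m=\infty\}$ cover a set of full probability; on each such event the stopped process coincides with $W_t$, and therefore $W_t$ converges a.s.

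Finally I would read off the two conclusions. Since $\sum_{s=1}^{t-1}\chi_s\to\sum_s\chi_s<\infty$ a.s., convergence of $W_t$ forces $V_t+\sum_{s=1}^{t-1}\eta_s$ to converge a.s. to a finite limit. As $\eta_s\geq 0$ the partial sums $\sum_{s=1}^{t-1}\eta_s$ are nondecreasing, and $V_t\geq 0$ keeps them bounded by that finite limit, so $\sum_s\eta_s<\infty$ a.s.; subtracting this now-convergent series then shows $V_t$ itself converges a.s. to a finite $V_\infty$. Undoing the rescaling by multiplying through by the a.s.-finite limit of $\beta_t$ transfers both statements back to the original $V_t$ and $\eta_t$, completing the argument.
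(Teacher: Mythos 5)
The paper itself contains no proof of this statement: Theorem~\ref{thm:RS} is quoted as a known auxiliary result with a citation to Robbins and Siegmund's 1971 paper. So the only meaningful comparison is with the classical proof, and your argument is precisely that classical route: eliminate the multiplicative factor via $\beta_t=\prod_{s<t}(1+\alpha_s)$ (using $\beta_t\le\exp(\sum_s\alpha_s)<\infty$ a.s.), form the almost-supermartingale $W_t=V_t-\sum_{s<t}(\chi_s-\eta_s)$, localize with the stopping times $\tau_m$ keyed to the running sum of the $\chi_s$, conclude convergence of the stopped process from Doob's theorem, and patch together over the events $\{\tau_m=\infty\}$, which exhaust a set of full probability because $\sum_s\chi_s<\infty$ a.s. The algebra in the reduction, the verification that the stopped process is bounded below by $-m$, and the final extraction of both conclusions (monotonicity of $\sum_{s<t}\eta_s$ plus $V_t\ge0$) are all carried out correctly.

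One technical point keeps the proof from being fully rigorous for the theorem \emph{as stated}, which assumes the four processes are nonnegative and adapted but \emph{not} integrable. Without integrability, $W_1=V_1$ may fail to be integrable, so $(W_t)$ and even the stopped process $(W_{t\wedge\tau_m})$ need not be supermartingales in the classical sense, and Doob's convergence theorem under $\sup_t\mathbb{E}[W_{t\wedge\tau_m}^-]\le m$ cannot be invoked as such; indeed, for the unstopped process the conditional expectation $\mathbb{E}[W_{t+1}\,|\,{\cal F}_t]$ is not even classically defined when both the positive and negative parts have infinite mean. The repair stays entirely within your own localization scheme: work with the stopped process from the outset (on $\{\tau_m>t\}$ one has $W_{t+1}\ge-m$, so all conditional expectations are well defined in $(-\infty,+\infty]$ and the supermartingale inequality can be verified directly), then restrict to the event $\{V_1\le k\}\in{\cal F}_1$; an induction using the drift inequality gives $\mathbb{E}\bigl[(W_{t\wedge\tau_m}+m)\,\mathbf{1}_{\{V_1\le k\}}\bigr]\le k+m$ for all $t$, so on each such event you have a genuine nonnegative integrable supermartingale, and taking the union over $k$ and $m$ recovers the full statement. (Equivalently, one can cite the convergence theorem for nonnegative generalized supermartingales, which requires no integrability.) For the application in this paper the issue is vacuous, since there $V_t=\|x_t-x^\star(x_t)\|^2\le R^2$, $\chi_t=\gamma^2/t^2$, and $\eta_t\le R^2/t$ are all bounded.
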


We can now state the main result on the convergence of the normalized subgradient method for convex (but possibly non-smooth) functions that meet the acute angle condition (\ie functions that meet Definition~\ref{def:acute}) in a randomly corrupted subgradient setting.  

\begin{theorem}\label{thm:main}
Let $f(\cdot)$ be a convex function defined on $\cal X$ satisfying the acute angle condition for some $\phi\in[0,\pi/2)$. 
% Suppose that for some $\phi$ satisfying $0\leq\phi<\frac{\pi}{2}$ and for all $x\in{\cal X}$ the following inequality holds:
% \begin{equation}\label{eq:schorscondition}
%     \langle\nabla f(x),x-x^\star(x)\rangle\geq \cos{\phi}\|\nabla f(x)\|\|x-x^\star(x)\|,
% \end{equation}
% where $x^\star(x)$ is the point in the set of minima of $f$ that is nearest to x. 
Suppose that the corruption probability $p$ satisfies
\begin{equation*}\label{eq:probupperbound}
    p<\frac{\cos{\phi}}{1+\cos{\phi}}.
\end{equation*}
Let $\gamma =\frac{R}{2((1-q)\cos{\phi}-q)}$ for some $q\in[p,\frac{\cos{\phi}}{1+\cos{\phi}})$ and set the step size as $\gamma_t = \gamma / t$. Then, the iterates generated by Algorithm~\ref{alg:normalizedgd} have the following properties:
\begin{equation}\label{eq:thmresult1}
    \mathbb{E}\left[\|x_{T+1}-x^\star(x_{T+1})\|^2\right]\leq\frac{\gamma^2(1+\log{T})}{T}
\end{equation}
and
\begin{equation}\label{eq:thmresult2}
 \underset{t\rightarrow\infty}{\lim}\|x_t-x^\star(x_t)\|^2=0,\quad a.s.
\end{equation}
\end{theorem}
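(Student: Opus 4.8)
The plan is to track the squared distance to the nearest minimizer, $V_t := \|x_t - x^\star(x_t)\|^2$, and show it obeys a supermartingale-type inequality to which both a telescoping argument (for \eqref{eq:thmresult1}) and the Robbins--Siegmund Theorem (for \eqref{eq:thmresult2}) apply.

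First I would derive a one-step recursion. Since $x^\star(x_{t+1})$ is the minimizer closest to $x_{t+1}$ whereas $x^\star(x_t)$ is merely some minimizer, we have $V_{t+1}\le \|x_{t+1}-x^\star(x_t)\|^2$. Combining this with the non-expansiveness of the projection $\Pi_{\cal X}$ and the fact that $\|h_t/\|h_t\|\|=1$ yields
\begin{equation*}
V_{t+1}\le V_t - 2\gamma_t \left\langle \frac{h_t}{\|h_t\|},\, x_t-x^\star(x_t)\right\rangle + \gamma_t^2 .
\end{equation*}
Taking $\mathbb{E}[\,\cdot\mid{\cal F}_t]$ and splitting according to \eqref{eq:gradients}, the uncorrupted branch (probability $1-p$) is lower-bounded using the acute angle condition, $\langle g(x_t)/\|g(x_t)\|,\,x_t-x^\star(x_t)\rangle\ge \cos\phi\,\sqrt{V_t}$, while the corrupted branch (probability $p$) is controlled in the worst case by Cauchy--Schwarz, $\langle b_t/\|b_t\|,\,x_t-x^\star(x_t)\rangle\ge -\sqrt{V_t}$. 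This gives
\begin{equation*}
\mathbb{E}[V_{t+1}\mid {\cal F}_t]\le V_t - 2\gamma_t\big((1-p)\cos\phi - p\big)\sqrt{V_t} + \gamma_t^2 ,
\end{equation*}
where the hypothesis $p<\cos\phi/(1+\cos\phi)$ is precisely what makes $c:=(1-p)\cos\phi - p>0$.

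For the quantitative bound \eqref{eq:thmresult1}, I would exploit compactness, $\sqrt{V_t}\le R$, to linearize the drift via $\sqrt{V_t}\ge V_t/R$. With $\gamma_t=\gamma/t$ and $\gamma=R/(2c_q)$ where $c_q:=(1-q)\cos\phi-q\le c$, the factor $2\gamma c/R=c/c_q\ge1$, so using $V_t\ge0$ the recursion collapses to $\mathbb{E}[V_{t+1}\mid{\cal F}_t]\le V_t(1-1/t)+\gamma^2/t^2$. Taking total expectations and multiplying by $t$ shows that $u_t:=(t-1)\,\mathbb{E}[V_t]$ satisfies $u_{t+1}\le u_t+\gamma^2/t$; since $u_1=0$, summing the harmonic series yields $T\,\mathbb{E}[V_{T+1}]\le \gamma^2\sum_{t=1}^{T}1/t\le \gamma^2(1+\log T)$, which is exactly \eqref{eq:thmresult1}.

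For the almost sure statement \eqref{eq:thmresult2}, I would apply Theorem~\ref{thm:RS} to the recursion with $\alpha_t=0$, $\chi_t=\gamma^2/t^2$ (summable), and $\eta_t=2\gamma_t c\sqrt{V_t}\ge0$. Robbins--Siegmund then gives that $V_t$ converges a.s. to some $V_\infty$ and that $\sum_t \eta_t = 2\gamma c\sum_t \sqrt{V_t}/t<\infty$ a.s. The final step, which I expect to be the main obstacle, is ruling out $V_\infty>0$: on any event where $V_\infty>0$, the quantity $\sqrt{V_t}$ is eventually bounded below by a positive constant, forcing $\sum_t \sqrt{V_t}/t$ to diverge like the harmonic series and contradicting the finiteness of $\sum_t\eta_t$; hence $V_\infty=0$ a.s. I would also verify the degenerate case $\|h_t\|=0$ separately, where $x_{t+1}=x_t$ leaves $V_t$ unchanged (and, for the uncorrupted branch, convexity forces $V_t=0$), so the recursion continues to hold.
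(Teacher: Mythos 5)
Your proposal is correct and follows essentially the same route as the paper's proof: the same Lyapunov quantity $V_t=\|x_t-x^\star(x_t)\|^2$, the same one-step bound (non-expansive projection, Cauchy--Schwarz on the corrupted branch, the acute angle condition on the clean branch), the same linearization $\sqrt{V_t}\ge V_t/R$ with the prescribed $\gamma$, and Robbins--Siegmund for the almost sure claim. The only departures are minor: you telescope by multiplying the recursion through by $t$ instead of the paper's products $\prod_{i=t+1}^{T}(1-1/i)=t/T$, and you identify $V_\infty=0$ directly from the Robbins--Siegmund conclusion $\sum_t \sqrt{V_t}/t<\infty$ a.s.\ (a contradiction on the event $\{V_\infty>0\}$), whereas the paper instead applies Robbins--Siegmund after linearizing (so $\eta_t=V_t/t$) and then combines its expectation bound $\mathbb{E}[V_t]\to 0$ with nonnegativity --- both finishes are valid, and yours has the slight advantage of being self-contained rather than relying on the rate result of the first part.
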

\begin{proof}
Let $Y_t$ be a Bernoulli($p$) random variable, which indicates whether the subgradient is trustworthy or corrupt:
\begin{align}
    &\|x_{t+1}-x^\star(x_{t+1})\|^2\overset{(a)}{\leq} \|x_{t+1}-x^\star(x_t)\|^2 \\
    &=\|{\Pi}_{\cal X}(x_t-\gamma_t \frac{h_t}{\|h_t\|})-x^\star(x_t)\|^2\\
    &\overset{(b)}{\leq}\|x_t-x^\star(x_t)-\gamma_t \frac{h_t}{\|h_t\|}\|^2\\
    &=\|x_t-x^\star(x_t)\|^2+\gamma_t^2-2\gamma_t\langle x_t-x^\star(x_t),\frac{h_t}{\|h_t\|}\rangle\\
    \begin{split}\label{eq:20}
        &=\|x_t-x^\star(x_t)\|^2+\gamma_t^2-2\gamma_t Y_t\langle x_t-x^\star(x_t),\frac{b_t}{\|b_t\|}\rangle\\
        &\quad-2\gamma_t (1-Y_t)\langle x_t-x^\star(x_t),\frac{g(x_t)}{\|g(x_t)\|}\rangle
    \end{split}\\
     \begin{split}\label{eq:21}
        &\overset{(c)}{\leq}\|x_t-x^\star(x_t)\|^2+\gamma_t^2+2\gamma_t Y_t\|x_t-x^\star(x_t)\|\\
        &\quad-2\gamma_t (1-Y_t)\langle x_t-x^\star(x_t),\frac{g(x_t)}{\|g(x_t)\|}\rangle
    \end{split}\\
    \begin{split}
        &\overset{(d)}{\leq}\|x_t-x^\star(x_t)\|^2+\gamma_t^2+2\gamma_t Y_t\|x_t-x^\star(x_t)\|\\
        &\quad-\frac{2\gamma_t(1-Y_t)}{\|g(x_t)\|}\cos{\phi} \|x_t-x^\star(x_t)\|\|g(x_t)\|
    \end{split}\\
    % \begin{split}
    %     &=\|x_t-x^\star(x_t)\|^2+\gamma_t^2+2\gamma_t Y_t\|x_t-x^\star(x_t)\|\\
    %     &\quad-2\gamma_t(1-Y_t)\cos{\phi} \|x_t-x^\star(x_t)\|
    % \end{split}\\
        \begin{split}
        &=\|x_t-x^\star(x_t)\|^2+\gamma_t^2\\&\quad - 2\gamma_t\|x_t-x^\star(x_t)\|(\cos{\phi}(1-Y_t)-Y_t),
    \end{split}
    \label{eq:pfbeforeexpectation}
\end{align}
where (a) follows by definition of $x^\star(x)$, (b) is due to nonexpansiveness Euclidean projection, (c) is due to Cauchy–Schwarz inequality, and (d) uses \eqref{eq:acutangle}. Taking expectation of both sides and noting that $x_t$ and $Y_t$ are independent:
\begin{equation}\label{eq:pfafterexpectation}
    \begin{split}
        &\mathbb{E}\left[\|x_{t+1}-x^\star(x_{t+1})\|^2\right]\leq\mathbb{E}\left[x_t-x^\star(x_t)\|^2\right]+\gamma_t^2\\
        &\hspace{1cm}-2\gamma_t(\cos{\phi}(1-p)- p)\mathbb{E}\left[\|x_t-x^\star(x_t)\|\right].
    \end{split}
\end{equation}
Since $p<\frac{\cos{\phi}}{1+\cos{\phi}}$; $(\cos{\phi}(1-p)-p)>0$. Therefore, to upper bound the inequality, we lower bound $\mathbb{E}\left[\|x_t-x^\star(x_t)\|\right]$ as:
\begin{equation}\label{eq:25}
\begin{split}
    \mathbb{E}\left[\|x_t-x^\star(x_t)\|\right]&=\mathbb{E}\left[\frac{\|x_t-x^\star(x_t)\|^2}{\|x_t-x^\star(x_t)\|}\right]\\
    &\geq \frac{\mathbb{E}\left[\|x_t-x^\star(x_t)\|^2\right]}{R}
\end{split}
\end{equation}
Therefore \eqref{eq:pfafterexpectation} becomes
\begin{equation}
    \begin{split}
           &\mathbb{E}\left[\|x_{t+1}-x^\star(x_{t+1})\|^2\right]\leq \gamma_t^2\\
           &+\mathbb{E}\left[x_t-x^\star(x_t)\|^2\right]\left(1-\gamma_t\frac{2(\cos{\phi}(1-p)- p)}{R}\right).
    \end{split}
\end{equation}
Plugging $\gamma_t$, we obtain:
\begin{align}
    \begin{split}
         &\mathbb{E}\left[\|x_{t+1}-x^\star(x_{t+1})\|^2\right]\leq \frac{\gamma^2}{t^2}\\
           &+\mathbb{E}\left[x_t-x^\star(x_t)\|^2\right]\left(1-\frac{(\cos{\phi}(1-p)- p)}{(\cos{\phi}(1-q)- q)}\frac{1}{t}\right)
    \end{split}\\
    &\leq \frac{\gamma^2}{t^2}+\mathbb{E}\left[x_t-x^\star(x_t)\|^2\right]\left(1-\frac{1}{t}\right),
\end{align}
where the last inequality uses $\frac{\cos{\phi}(1-p)- p}{\cos{\phi}(1-q)-q}\geq 1$ for $q\in[p,\frac{\cos{\phi}}{1+\cos{\phi}})$. Finally, a telescopic sum gives:
\begin{align}
    \begin{split}
        \mathbb{E}\left[\|x_{T+1}-x^\star(x_{T+1})\|^2\right] &\leq\gamma^2\sum_{t=1}^T\frac{1}{t^2}\prod_{i=t+1}^T(1-\frac{1}{i})\\
         &\hspace{-1cm}+\|x_1-x^\star(x_1)\|^2\prod_{t=1}^T(1-\frac{1}{t})
     \end{split}\\
     & \hspace{-3cm}=\gamma^2\sum_{t=1}^T\frac{1}{t^2}\frac{t}{T}=\frac{\gamma^2}{T}\sum_{t=1}^T\frac{1}{t}\leq\frac{\gamma^2(1+\log{T})}{T}\label{eq:pfthm1result1},
\end{align}
which completes the first part of the proof. To prove the almost sure convergence of $\|x_t-x^\star(x_t)\|^2$, we go back to~\eqref{eq:pfbeforeexpectation} and take the expectation conditioned on $x_t$ to obtain:
\begin{align}
\begin{split}
      \mathbb{E}[\|x_{t+1}-x^\star(x_{t+1})\|^2|x_t]&\leq\|x_t-x^\star(x_t)\|^2+\gamma_t^2\\
      &\hspace{-2cm}-2\gamma_t\frac{\cos{\phi}(1-p)-p}{ R}\|x_t-x^\star(x_t)\|^2
\end{split}
  \\
    &\hspace{-3.6cm}=\|x_t-x^\star(x_t)\|^2\left(1-2\gamma_t\frac{\cos{\phi}(1-p)- p}{ R}\right)+\gamma_t^2\\
    &\hspace{-3.6cm}\leq\|x_t-x^\star(x_t)\|^2\left(1-\frac{1}{t}\right)+\frac{\gamma^2}{t^2}
\end{align}

We now apply Theorem~\ref{thm:RS} with $V_t=\|x_t-x^\star(x_t)\|^2$, $\alpha_t=0$, $\eta_t=\|x_t-x^\star(x_t)\|^2/t$, and $\chi_t={\gamma^2}/{t^2}$ to conclude that $\|x_t-x^\star(x_t)\|^2$ converges almost surely and $\sum_t \|x_t-x^\star(x_t)\|^2/t$ is finite almost surely. In order to determine where $\|x_t-x^\star(x_t)\|^2$ converges as well, we use~\eqref{eq:pfthm1result1} to obtain:
\begin{equation}
    \underset{t\rightarrow\infty}{\lim}\mathbb{E}\left[\|x_{t}-x^\star(x_t)\|^2\right]=0.
\end{equation}
Finally, since $\|x_{t}-x^\star(x_t)\|^2\geq 0$, $\underset{t\rightarrow \infty}{\lim}\|x_t-x^\star(x_t)\|^2=0$ almost surely and therefore $x_t$ converges to a minimum point~$x^\star$ almost surely.
\end{proof}

Theorem~\ref{thm:main} is a general result for convex functions satisfying the acute angle condition (\ie convex functions that meet Definition~\ref{def:acute}). The next corollary states a similar result for strongly convex and smooth functions (\ie functions that meet Definitions~\ref{def:smooth} and \ref{def:strong}).
\begin{corollary}\label{cor:strong}
Suppose that $f(\cdot)$ is $\mu$-strongly convex and $\beta$-smooth. Assume that the parameter set $\cal X$ contains a non-empty ball centered at $x^\star$. Define condition number of $f(\cdot)$ as $\kappa\eqdef{\beta}/{\mu}$. If the corruption probability satisfies $p<\frac{1}{1+\kappa}$,
then the iterates generated by Algorithm~\ref{alg:normalizedgd} with $\gamma=\frac{\kappa R}{2((1-q)-q\kappa)}$, $\gamma_t = \gamma / t$ for some $q\in[p,\frac{1}{1+\kappa})$ have the following properties:
\begin{equation}
    \mathbb{E}\left[\|x_{T+1}-x^\star\|^2\right]\leq\frac{\gamma^2(1+\log{T})}{T}
\end{equation}
and
\begin{equation}
 \underset{t\rightarrow\infty}{\lim}x_t=x^\star,\quad a.s.
\end{equation}
where $x^\star$ is the unique minimizer of $f(\cdot)$.
\end{corollary}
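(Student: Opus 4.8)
The plan is to reduce the corollary to Theorem~\ref{thm:main} by verifying that any $\mu$-strongly convex and $\beta$-smooth function automatically satisfies the acute angle condition of Definition~\ref{def:acute} with the specific value $\cos\phi = 1/\kappa$. Once this identification is made, every hypothesis and every constant in Theorem~\ref{thm:main} translates directly: the bound $p < \cos\phi/(1+\cos\phi)$ becomes $p < 1/(1+\kappa)$, the step-size constant $\gamma = R/(2((1-q)\cos\phi - q))$ becomes $\gamma = \kappa R/(2((1-q) - q\kappa))$, and the admissible range $q \in [p, \cos\phi/(1+\cos\phi))$ becomes $q \in [p, 1/(1+\kappa))$, matching the statement exactly. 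The two convergence conclusions then follow verbatim from the corresponding bounds \eqref{eq:thmresult1} and \eqref{eq:thmresult2}.

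To establish the acute angle condition, I would first use the assumption that $\cal X$ contains a ball centered at $x^\star$ to conclude that $x^\star$ is an interior minimizer, so the first-order optimality condition yields $g(x^\star) = 0$. Strong convexity (Definition~\ref{def:strong}) applied with $x_1 = x$ and $x_2 = x^\star$ then gives $\langle g(x), x - x^\star \rangle \geq \mu \|x - x^\star\|^2$, while $\beta$-smoothness (Definition~\ref{def:smooth}) gives $\|g(x)\| = \|g(x) - g(x^\star)\| \leq \beta \|x - x^\star\|$, hence $\|x - x^\star\| \geq \|g(x)\|/\beta$. Chaining these two estimates yields
\begin{equation}
\langle g(x), x - x^\star \rangle \geq \mu \|x - x^\star\|^2 \geq \frac{\mu}{\beta} \|g(x)\| \|x - x^\star\| = \frac{1}{\kappa} \|g(x)\| \|x - x^\star\|,
\end{equation}
which is precisely \eqref{eq:acutangle} with $\cos\phi = 1/\kappa$. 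Since $\beta \geq \mu$ forces $\kappa \geq 1$, the quantity $1/\kappa$ lies in $(0,1]$, so $\phi \in [0, \pi/2)$ as required by the definition.

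A second, smaller point I would address is that for a $\mu$-strongly convex function the minimizer is unique, so $x^\star(x) = x^\star$ for every $x$; this is what lets me replace $\|x_{T+1} - x^\star(x_{T+1})\|$ by $\|x_{T+1} - x^\star\|$ in the conclusions and assert convergence to the single point $x^\star$. The main obstacle is not the analysis itself, which is a clean reduction, but rather ensuring $g(x^\star) = 0$: this is exactly where the interior-ball hypothesis is needed, since if $x^\star$ lay on the boundary of $\cal X$ the gradient there could be nonzero and the smoothness-based bound $\|g(x)\| \leq \beta\|x - x^\star\|$ would fail. With that hypothesis in hand, the derivation of $\cos\phi = 1/\kappa$ and the appeal to Theorem~\ref{thm:main} complete the argument.
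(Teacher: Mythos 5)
Your proposal is correct and follows essentially the same route as the paper: use the interior-ball hypothesis to get $g(x^\star)=0$, chain strong convexity with $\beta$-smoothness to obtain $\langle g(x),x-x^\star\rangle\geq \frac{1}{\kappa}\|g(x)\|\|x-x^\star\|$, and invoke Theorem~\ref{thm:main} with $\cos\phi=1/\kappa$. Your additional remarks (uniqueness of the minimizer so that $x^\star(x)=x^\star$, and the check that $1/\kappa\in(0,1]$ so $\phi\in[0,\pi/2)$) are details the paper leaves implicit, and they make the reduction slightly more complete.
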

\begin{proof}
If the parameter set contains a non-empty ball centered at $x^\star$, then $g(x^\star)=0$. Using strong convexity and smoothness properties, we  write:
\begin{align}
    &\langle g(x),x-x^\star \rangle\geq \mu\|x-x^\star\|^2=\frac{\mu\|x-x^\star\|^2\|g(x)\|}{\|g(x)\|} \nonumber\\
    &\geq\frac{\mu\|x-x^\star\|^2\|g(x)\|}{\beta\|x-x^\star\|}=\frac{1}{\kappa}\|x-x^\star\|\|g(x)\|.
\end{align}
By setting $\cos{\phi}=\frac{1}{\kappa}$, we get the required condition in \eqref{eq:acutangle} and hence the results from Theorem~\ref{thm:main} follow.
\end{proof}

The class of functions for which Algorithm~\ref{alg:normalizedgd} provides convergence in a randomly corrupted subgradient setting go beyond convex and strongly convex functions. The next corollary is for weakly pseudo-convex functions satisfying the acute angle condition (\ie functions that meet Definitions~\ref{def:acute} and \ref{def:wpc}).
\begin{corollary}\label{cor:wpc}
Suppose that $f(\cdot)$ is weakly pseudo-convex and satisfies the acute angle condition for some $\phi\in[0,\pi/2)$. If the corruption probability $p$ satisfies $p<\frac{\cos{\phi}}{1+\cos{\phi}}$,
then the iterates generated by Algorithm~\ref{alg:normalizedgd} with $\gamma=\frac{R}{2((1-q)\cos{\phi}-q)}$, $\gamma_t = \gamma / t$ for some $q\in[p,\frac{\cos{\phi}}{1+\cos{\phi}})$ satisfy:
\begin{equation}
    \mathbb{E}\left[\|x_{T+1}-x^\star(x_{T+1})\|^2\right]\leq\frac{\gamma^2(1+\log{T})}{T}
\end{equation}
and
\begin{equation}
 \underset{t\rightarrow\infty}{\lim}\|x_t-x^\star(x_t)\|^2=0,\quad a.s.
\end{equation}
\end{corollary}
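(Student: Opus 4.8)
The plan is to recognize that the proof of Theorem~\ref{thm:main} never actually invokes convexity of $f$; it rests solely on the acute angle condition of Definition~\ref{def:acute}. Under that reading, Corollary~\ref{cor:wpc} reduces immediately to Theorem~\ref{thm:main}, since its hypotheses supply exactly the same acute angle condition (for the same $\phi\in[0,\pi/2)$), the identical probability bound $p<\cos\phi/(1+\cos\phi)$, and the identical stepsize $\gamma_t=\gamma/t$ with $\gamma=R/(2((1-q)\cos\phi-q))$ and $q\in[p,\cos\phi/(1+\cos\phi))$.

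First I would retrace the inequality chain \eqref{eq:20}--\eqref{eq:pfbeforeexpectation} and verify that no step secretly uses convexity. My expectation is that it does not: step (a) needs only that $x^\star(x_t)$ is a minimizer (so the nearest minimizer $x^\star(x_{t+1})$ is at least as close to $x_{t+1}$), step (b) is nonexpansiveness of the projection onto $\cal X$, step (c) is Cauchy--Schwarz on the corrupt term, and step (d) is precisely the acute angle inequality \eqref{eq:acutangle} applied to the trustworthy term. Having confirmed this, I would import the remainder of Theorem~\ref{thm:main}'s argument unchanged: take expectations, lower bound $\mathbb{E}[\|x_t-x^\star(x_t)\|]$ by $\mathbb{E}[\|x_t-x^\star(x_t)\|^2]/R$ as in \eqref{eq:25}, telescope to obtain the ${\cal O}(\log T/T)$ rate, and apply the Robbins--Siegmund Theorem~\ref{thm:RS} with $V_t=\|x_t-x^\star(x_t)\|^2$, $\alpha_t=0$, $\eta_t=\|x_t-x^\star(x_t)\|^2/t$, and $\chi_t=\gamma^2/t^2$ to conclude almost sure convergence.

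The main thing requiring care here is not a genuine obstacle but rather articulating the role of the weak pseudo-convexity hypothesis, which does not enter the distance bounds at all. In the convex case it was automatic that the set of minima consists of global minimizers; for this broader, possibly non-convex class I would note that Definition~\ref{def:wpc} is what certifies that driving $\|x_t-x^\star(x_t)\|^2$ to zero corresponds to approaching a genuine global optimum. Concretely, combining Definition~\ref{def:wpc} with Cauchy--Schwarz applied to $\langle g(x_t)/\|g(x_t)\|,\,x_t-x^\star(x_t)\rangle$ yields $f(x_t)-f(x^\star)\leq K\|x_t-x^\star(x_t)\|$, so the distance convergence established above additionally forces $f(x_t)\to f(x^\star)$ should function-value convergence be desired.
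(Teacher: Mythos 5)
Your overall plan --- re-run the proof of Theorem~\ref{thm:main} verbatim --- is indeed what the paper does, but the justification you give for why this is legitimate contains a genuine gap: it is \emph{not} true that the proof of Theorem~\ref{thm:main} rests solely on the acute angle condition, and it is not true that weak pseudo-convexity ``does not enter the distance bounds at all.'' The hidden use of convexity is in the degenerate case $g(x_t)=0$. Step (d) divides the acute angle inequality \eqref{eq:acutangle} by $\|g(x_t)\|$, and when $\|h_t\|=0$ Algorithm~\ref{alg:normalizedgd} takes the branch $x_{t+1}=x_t$, for which the displayed chain does not apply; in that branch the key recursion \eqref{eq:pfbeforeexpectation} (with $Y_t=0$) would require $0\leq \gamma_t^2-2\gamma_t\cos\phi\,\|x_t-x^\star(x_t)\|$, which is false whenever $\|x_t-x^\star(x_t)\|>\gamma_t/(2\cos\phi)$. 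In Theorem~\ref{thm:main} this case is harmless only \emph{because of convexity}: $0\in\partial f(x_t)$ forces $x_t$ to be a global minimizer, so $\|x_t-x^\star(x_t)\|=0$. The acute angle condition alone cannot play this role, since it holds vacuously (both sides vanish) at any critical point, minimizer or not. Concretely, $f(x)=x^2(x-2)^2$ on ${\cal X}=[-1,3]$ satisfies \eqref{eq:acutangle} with $\phi=0$ yet has a non-minimizing critical point at $x=1$; initializing $x_1=1$, the trustworthy feedback is $g(x_1)=0$ and the adversary may send $b_t=0$, so NSM never moves and both conclusions of the corollary fail. This $f$ is, of course, not weakly pseudo-convex, which is exactly the point.

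The one idea your proposal is missing is precisely the content of the paper's short proof: by Definition~\ref{def:wpc}, with the convention $g(x)/\|g(x)\|=0$ when $g(x)=0$, a vanishing gradient forces $f(x)-f(x^\star)\leq 0$, i.e., weak pseudo-convexity rules out non-minimizing critical points (no local extrema or saddle points), so $g(x)=0$ only at global minimizers. That observation substitutes for convexity exactly where convexity was silently used, and only then may the proof of Theorem~\ref{thm:main} be imported unchanged. Once this is added, the rest of your plan (taking expectations, the lower bound \eqref{eq:25}, telescoping, and Robbins--Siegmund via Theorem~\ref{thm:RS}) coincides with the paper. Your closing remark that Definition~\ref{def:wpc} plus Cauchy--Schwarz gives $f(x_t)-f(x^\star)\leq K\|x_t-x^\star(x_t)\|$, hence function-value convergence, is correct, but it is an optional add-on rather than the reason the hypothesis is needed.
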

\begin{proof}
    By Definition~\ref{def:wpc}, if $f(\cdot)$ is weakly pseudo-convex, then it does not have local extrema nor saddle points. Therefore,  $g(x)=0$ only if $x\in \underset{x\in\cal X}{\arg \min}~f(x)$. The proof is then identical to that of Theorem~\ref{thm:main}.
\end{proof}
\begin{remark}
The class of non-convex smooth functions satisfying the acute angle condition also meet Corollary~\ref{cor:wpc}. If $f$ is smooth and the diameter of $\cal X$ is bounded, then $\|g(\cdot)\|$ is bounded. By Proposition 2 in \cite{gao2018online},  if $f$ has bounded gradient and satisfies the acute angle condition, then $f$ is weakly pseudo-convex. Therefore, Corollary~\ref{cor:wpc} holds for $f$.
\end{remark}

We would like to highlight the implication of the sufficient condition on the corruption probability, which sets an upper bound on $p$ for Theorem~\ref{thm:main} and Corollaries~\ref{cor:strong}~\&~\ref{cor:wpc}. Observe that the upper bound is a decreasing function of the angle $\phi$ of the acute angle condition (or $\kappa$, the condition number for strongly convex functions). The acute angle condition determines an upper bound on the angle between $g(x)$ and $x-x^\star(x)$, $\forall x\in{\cal X}$. Therefore, the smaller the angle between $g(x)$ and $x-x^\star(x)$, the more accurate the update direction, since the goal is to find $x^\star(x)$. This reflects to the upper bound on $p$: The functions that meet the acute angle condition with small $\phi$ (or small condition number $\kappa$, \ie well-conditioned problems) can tolerate a higher corruption probability and therefore NSM is more robust to adversarial corruption. The intuition behind is that because the worst adversary may know the optimal solution, they can set the feedback along the direction of $x^\star(x)-x$ (which meets the Cauchy-Schwarz inequality used for Equation~\eqref{eq:20} to \eqref{eq:21}). Therefore, the more aligned the direction of the trustworthy subgradient with $x-x^\star(x)$, the more effective the trustworthy updates and the NSM can tolerate higher corruption probabilities.

In the next section, we present the experiments on the robustness of the normalized subgradient method.
\section{NUMERICAL EXPERIMENTS}\label{sec:numerical}
\begin{figure}[t]
    \centering
    \includegraphics[width=.5\textwidth]{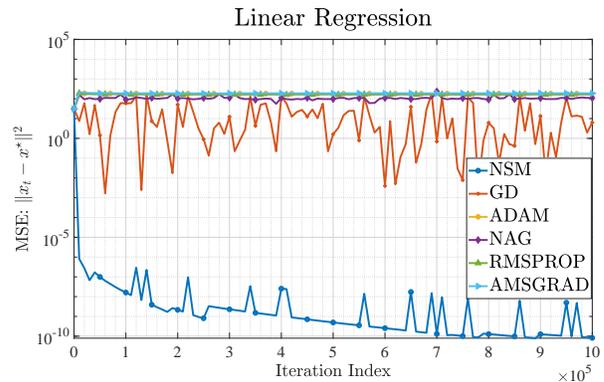}
    \vspace{-.5cm}
    \caption{Performances of first-order algorithms for Linear Regression using mean squared error as metric.}
    \label{fig:linearregression}
\end{figure}

In this section, we demonstrate the robustness of the normalized subgradient method in an adversarial setting by applying it to linear regression and logistic classification problems. Furthermore, we include performances of several other first-order algorithms for comparison. Lastly, we demonstrate the effect of the threshold corruption probability using the toy example discussed in Section~\ref{sec:problem}.
\subsection{Linear Regression}

Consider the following problem:
\begin{equation}
    \underset{x\in{\cal X}}{\min}~\|y-Ax\|^2,
\end{equation}
where $A\in\mathbb{R}^{N\times d}$ is a matrix containing the $N$ data vectors in its rows and $y\in\mathbb{R}^N$ is the vector containing the $N$ associated label values or outputs. The task is to minimize the empirical quadratic error between a function $y_i=g(A_i)$ of $d$ variables and its linear regression $\hat{g}(A_i)=A_i x$ on the original dataset (for $i=1,\dots,N$), where $A_i$ denotes the $i$'th row of~$A$. We picked $d=100$, $N=1000$ and randomly generated the entries of $A$ from ${\cal N}(0,1)$. To generate the output $y_i$ for data point $A_i$, we first sampled a vector $w$ from the interior of the $d$-ball with radius $R=10$ and added noise: $y_i=A_i w+\xi_i$, where $\xi\sim{\cal N}(0,R^2/16)$ is the noise. We determined the optimal solution $x^\star$ using the closed-form solution $    x^\star=(A^TA)^{-1}A^Ty$.

The problem is strongly convex with condition number $\kappa={\sigma_{\max}(A)}/{\sigma_{\min}(A)}$, where $\sigma_{\max}(A)$ and $\sigma_{\min}(A)$ are the maximal and the minimal singular values of $A$, respectively. We set $p=({1}/{2})({1}/({1+\kappa}))$ and $q=({3}/{4})({1}/({1+\kappa}))$. At each iteration, we picked the corrupt gradient as $b_t=({R}/{\gamma_t})(({x^\star-x_t})/{\|x^\star-x_t\|})$, which can be shown to be worst possible adversarial value at iteration $t$. 

In Figure~\ref{fig:linearregression} we compare the performances of several first-order optimization algorithms using mean squared error $\|x_t-x^\star\|^2$ as the performance metric. The first-order algorithms such as Gradient Descent (GD), ADAM \cite{kingma2014adam}, Nesterov's Accelerated Gradient Method (NAG) \cite{nesterov1983method}, RMSprop \cite{tieleman2012lecture}, and AMSGrad \cite{reddi2019convergence} fail to converge; whereas the normalized subgradient method (NSM) outlined in Algorithm~\ref{alg:normalizedgd} succeeds. Algorithms that do not normalize the gradient fail to converge due to the presence of the adversarial perturbations in the gradient. 

%On the contrary, NSM is able to limit the potential of corrupt gradients by normalizing them and therefore provide convergence.

\subsection{Logistic Classification}

\begin{figure}[t]
    \centering
    \includegraphics[width=.5\textwidth]{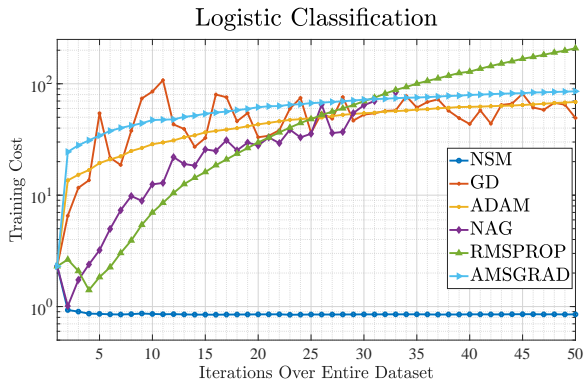}
    \vspace{-.5cm}
    \caption{Performances of first-order algorithms for Logistic Classification using training cost as metric.}
    \label{fig:logisticclassification}
\end{figure}

We study the robustness of Algorithm~\ref{alg:normalizedgd} on $L_2$-regularized multi-class logistic classification using the MNIST dataset~\cite{lecun1998gradient}. The task is to determine $m=10$ linear classifiers in order to separate $N=6000$ $d=784$-dimensional image vectors. The problem can be stated as:
\begin{equation}
    \underset{x\in\mathbb{R}^{m\times d}}{\min}-\frac{1}{N}\sum_{i=1}^N \log{\frac{e^{x_{y_i}A_i^T}}{\sum_{j=1}^m e^{x_{y_j}A_j^T}}}+\lambda\|x\|^2,
\end{equation}
where $A\in\mathbb{R}^{N\times d}$ is the matrix containing $N$ data vectors in its rows ($A_i$ denotes the $i$'th row of $A$) and $y\in \{0,1,\dots,9\}^N$ is the vector containing the $N$ associated classes. The decision parameter consists of $m$ vectors with dimension $d$, where each vector corresponds to a class (hence, $x_{y_i}$ corresponds to the $y_i$'th row of $x$, where $y_i$ is the class of $i$'th data vector). We note that the problem is unconstrained since ${\cal X}=\mathbb{R}^{m\times d}$ and therefore the parameter can go unbounded. Furthermore, although the problem is convex, it is not trivial to determine whether the cost function satisfies the acute angle condition. Nevertheless, we numerically experiment whether normalized subgradient method can provide convergence  in an adversarial setting.

We set $p=0.25$, $\gamma_t={0.1}/{t}$, and $\lambda=100$. At each iteration, we picked the corrupt gradient as $b_t=-15g(x_t)$, \ie 15 times the opposite of the trustworthy gradient. In Figure~\ref{fig:logisticclassification} we compare the performances of several first order optimization algorithms using the value of the cost function $f(x_t)$ during training as metric. Similar to Linear Regression experiment, NSM converges while other algorithms diverge.

\subsection{Threshold Probability}\label{sec:prob}

In this experiment, we demonstrate the significance of the threshold probability, for which our theoretical convergence results hold, using the toy example outlined in Section~\ref{sec:problem}. Note that for the cost function $f(x)=x_{d-1}^4$, the acute angle condition holds with equality for $\cos{\phi}=1/\sqrt{d}$. Therefore, the probability ${1}/({1+\sqrt{d}})$ is the exact threshold below which NSM converges and above which the adversary can cause NSM to diverge. We test this numerically for $d=10$, $R=10$, and $p_{\textnormal{threshold}}={1}/({1+\sqrt{d}})=0.2403$. We initialized $x_{0,i}=5,~\forall i \in [1,\cdots,d]$, and ran NSM for $p=\{0.1,0.2,p_{\textnormal{threshold}}-0.01,p_{\textnormal{threshold}},p_{\textnormal{threshold}}+0.01,0.4\}$ with $\gamma_t=200/t$. At each iteration, we set $b_t=-x_t$ if $x_{t,0}\neq 0$; and $b_t=[1,\dots,1]^T$ otherwise. In Figure~\ref{fig:simpleexample}, we plot $x_{t,0}$ versus the iteration index $t$ for each corruption probability. We observe that for $p<p_{\textnormal{threshold}}$, the iterate converges to the optimal solution and for $p>p_{\textnormal{threshold}}$, the iterate diverges (saturates at the bound). For $p=p_{\textnormal{threshold}}$, the iterate does not diverge, however, it converges to a suboptimal solution.

\begin{figure}[t]
    \centering
    \includegraphics[width=.5\textwidth]{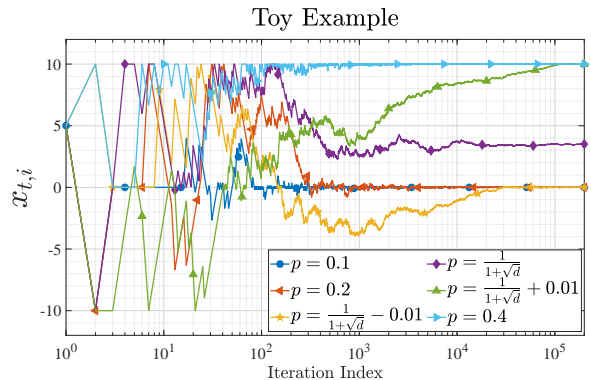}
    \vspace{-.5cm}
    \caption{Iterates generated by NSM for the simple example outlined in Section~\ref{sec:problem} for several corruption probabilities.}
    \label{fig:simpleexample}
\end{figure}

\section{CONCLUSIONS}
In this paper, we investigated the robustness of the normalized subgradient method in a randomly corrupted subgradient setting, where the subgradient at each iteration is arbitrarily corrupted with probability $p$. We have shown that if the cost function satisfies the acute angle condition for some angle $\phi$ and if $p$ is less than ${\cos{\phi}}/({1+\cos{\phi}})$, then the normalized subgradient method, with stepsizes that diminish as ${\cal O}(1/t)$, converges almost surely to a minimizer of the objective function. We provide sufficient conditions for convergence of the normalized (sub)gradient method for convex, strongly convex, and weakly pseudo-convex cost functions. Numerical examples show the robustness of the normalized subgradient method for linear regression and logistic classification problems.
  % This command serves to balance the column lengths
                                  % on the last page of the document manually. It shortens
                                  % the textheight of the last page by a suitable amount.
                                  % This command does not take effect until the next page
                                  % so it should come on the page before the last. Make
                                  % sure that you do not shorten the textheight too much.

%%%%%%%%%%%%%%%%%%%%%%%%%%%%%%%%%%%%%%%%%%%%%%%%%%%%%%%%%%%%%%%%%%%%%%%%%%%%%%%%

%%%%%%%%%%%%%%%%%%%%%%%%%%%%%%%%%%%%%%%%%%%%%%%%%%%%%%%%%%%%%%%%%%%%%%%%%%%%%%%%
\bibliographystyle{IEEEtran}
\bibliography{references, robust_federated, ngd, adversarial_corruption}

%%%%%%%%%%%%%%%%%%%%%%%%%%%%%%%%%%%%%%%%%%%%%%%%%%%%%%%%%%%%%%%%%%%%%%%%%%%%%%%%
% \section*{APPENDIX}

\end{document}